\documentclass{amsart}
\usepackage{amsmath}
\usepackage{amssymb}
\usepackage[a4paper]{geometry}
\usepackage{algorithm}
\usepackage{algpseudocode}
\newtheorem{thm}{Theorem}
\newtheorem{lem}{Lemma}
\newtheorem{claim}{Claim}
\theoremstyle{definition}
\newtheorem{remark}{Remark}
\newcommand{\rr}{\mathbb{R}}

\frenchspacing
\begin{document}
\title{Colouring multijoints}
\author{Anthony Carbery and Stef\'an Ingi Valdimarsson}
\address{Anthony Carbery, 
School of Mathematics and Maxwell Institute for Mathematical Sciences, 
University of Edinburgh,
JCMB, 
King's Buildings, 
Mayfield Road, 
Edinburgh, EH9 3JZ, 
Scotland.} 
\email{A.Carbery@ed.ac.uk}

\address{Stef\'an Ingi Valdimarsson,
Science Institute,
University of Iceland,
Dunhagi 3,
107 Reykjavik, 
Iceland.} 
\email{siv@hi.is}
\date{6 September 2013}
\maketitle
\begin{abstract}
Let $\mathbb{F}$ be a field, let $L_1, \dots ,L_d$ be pairwise disjoint collections
of lines 
in $\mathbb{F}^d$, and let $\mathcal{L}=\{L_1,\dots,L_d\}$. We say that a point $x\in\mathbb{F}^d$ 
is a \emph{multijoint of $\mathcal{L}$} if $x$ lies on a line from each of the collections in 
$\mathcal{L}$, and moreover the directions of these lines span $\mathbb{F}^d$. 
We prove that there exists a constant $C_d$ such that if
$\mathcal{L}$ is a generic family of collections of lines in $\mathbb{F}^d$ and $J$ is a set of 
multijoints of $\mathcal{L}$,
then there exists a $d$-colouring $\kappa: J \to \{1,2, \dots , d\}$ 
such that for each $j$, for each $l \in L_j$ we have $ | \{ x \in J \cap l \, : \kappa(x) = j\}| \leq C_d |J|^{1/d}$.
\end{abstract}
\section{Introduction}
\noindent
Let $\mathbb{F}$ be a field, let $L_1, \dots ,L_d$ be pairwise disjoint collections
of lines 
in $\mathbb{F}^d$, and let $\mathcal{L}=\{L_1,\dots,L_d\}$. We say that a point $x\in\mathbb{F}^d$ 
is a \emph{multijoint of $\mathcal{L}$} if $x$ lies on a line from each of the collections in 
$\mathcal{L}$, and moreover the directions of these lines span $\mathbb{F}^d$. 

\medskip
\noindent
Regard each line of $L_j$ as being coloured with colour $j$. In this note we address the problem of colouring the set $J$ of 
multijoints of $\mathcal{L}$ with as few colours as possible in such a way that no line of a given colour contains 
too many points of that same colour. We need to make these notions precise, and do so in the statement of our main 
result. Further clarification and a discussion of the context of the result follows in the remarks after its statement. 
The family $\mathcal{L}$ is said to be {\em generic} if whenever $l_j \in L_j$ 
meet at $x$, then the directions of the $l_j$ span $\mathbb{F}^d$.

\begin{thm}
    \label{thmmain}
Let $\mathcal{L}$ be a generic family of collections of lines in $\mathbb{F}^d$ as above, and let $J$ be a (finite) set of 
multijoints of $\mathcal{L}$.
Then there exists a constant $C_d$ which depends only on the dimension
$d$ and not on $\mathcal{L}$ or $J$, and a $d$-colouring $\kappa: J \to \{1,2, \dots , d\}$ 
such that for each $j$, for each $l \in L_j$,
$$ | \{ x \in J \cap l \, : \kappa(x) = j\}| \leq C_d |J|^{1/d}.$$
\end{thm}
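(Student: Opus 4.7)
My plan is to attack Theorem~\ref{thmmain} by the polynomial method, colouring each multijoint according to a line through it along which an auxiliary polynomial does not vanish identically. First I would choose the least $D$ with $\binom{D+d}{d}>|J|$, so that $D\le c_d|J|^{1/d}$, and then use a linear-algebra parameter count to produce a nonzero polynomial $P\in\ff[X_1,\dots,X_d]$ of degree at most $D$ that vanishes at every point of $J$.

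Next, for each $x\in J$ and each $j$, let $l_j(x)\in L_j$ denote the line through $x$ (which we may take to be unique, by the genericity hypothesis). I would call $l_j(x)$ \emph{free} if $P$ is not identically zero on it, and \emph{trapped} otherwise. The colouring rule is: if $x$ has at least one free line, set $\kappa(x)=j$ for some $j$ with $l_j(x)$ free; place the remaining \emph{fully-trapped} multijoints in a set $J'$ to be handled recursively. The key observation is that if $\kappa(x)=j$ and $x\in l\in L_j$, then $l=l_j(x)$ is free, so $P|_l$ is a nonzero univariate polynomial of degree at most $D$, hence has at most $D$ zeros on $l$; since every point of $J\cap l$ is such a zero, $|J\cap l|\le D\le c_d|J|^{1/d}$ already. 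A line $l\in L_j$ that is itself trapped receives no colour-$j$ point by construction, so its constraint is automatically vacuous.

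For the residual set $J'$, each of its points has all $d$ of its spanning lines contained in the hypersurface $Z(P)$, so $J'$ is a set of multijoints for the generic sub-families $L_j':=\{l\in L_j : l\subset Z(P)\}$. I would then apply Theorem~\ref{thmmain} recursively to $J'$ with the same palette of $d$ colours, giving a colour-$j$ count on any line of at most $D$ (from the first step) plus the recursive bound $C_d|J'|^{1/d}$. The hard part will be establishing $|J'|\le\alpha|J|$ for some constant $\alpha<1$ independent of the configuration: this is exactly what is needed for the induction on $|J|$ to close, with the geometric sum producing the desired $C_d|J|^{1/d}$ bound. My plan to handle this is either to insist that $P$ be of minimum degree (so that having too many lines forced into $Z(P)$ would contradict minimality), or to refine the polynomial step with a multiplicity parameter---taking $P$ of degree $O(s|J|^{1/d})$ vanishing to order $s$ on $J$---which tightly restricts which lines can be trapped and permits a dimension descent onto the $(d-1)$-dimensional hypersurface $Z(P)$.
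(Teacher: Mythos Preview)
Your polynomial-method approach is sound and is genuinely different from the paper's proof. The paper never constructs an auxiliary polynomial directly: it adds the points of $J$ one at a time, maintaining an $(m+1)$-unsaturated colouring; at each stage it builds a coloured rooted tree recording the obstructions to recolouring, defines a strict partial order on colourings via these trees, and shows a trichotomy (extend the colouring / pass to a more advanced colouring / the tree already has $\ge C_d^{-d}m^d$ vertices). The polynomial method enters only through Quilodr\'an's lemma, used as a black box in the third alternative.

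Your write-up, however, manufactures a gap that is not really there. You assert that the colour-$j$ count on a line is at most $D$ \emph{plus} the recursive bound $C_d|J'|^{1/d}$, and hence seek a geometric decay $|J'|\le\alpha|J|$ which you do not prove. But for a fixed line $l\in L_j$ these two contributions never add. If $l$ is free then $P|_l$ is a nonzero univariate polynomial of degree at most $D$, so $|J\cap l|\le D$ already bounds the \emph{total} number of colour-$j$ points on $l$, including those in $J'$ coloured recursively. If $l$ is trapped then $l\in L_j'$ and no point of $(J\setminus J')\cap l$ receives colour $j$ at the first step, so the inductive hypothesis alone gives the bound $C_d|J'|^{1/d}\le C_d|J|^{1/d}$. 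Thus the induction on $|J|$ closes with $C_d:=c_d$ as soon as $|J'|<|J|$, and your minimal-degree choice of $P$ delivers exactly that: if every $x\in J$ were fully trapped then $\nabla P$ would vanish on $J$, producing a nonzero polynomial of strictly smaller degree vanishing on $J$. No multiplicities or dimension descent are needed.

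Two small repairs. First, genericity does \emph{not} guarantee that the line of $L_j$ through $x$ is unique; instead declare $\kappa(x)=j$ only when \emph{every} line of $L_j$ through $x$ is free, and place $x$ in $J'$ when each colour has at least one trapped line through $x$. Genericity then ensures the chosen trapped lines span, so $\nabla P(x)=0$ on $J'$ as before, and a trapped line still receives no colour-$j$ points at step one. Second, in positive characteristic $\nabla P$ may vanish identically; pass to the perfect closure of $\ff$, where $\nabla P\equiv 0$ forces $P=Q^p$ with $Q$ of smaller degree vanishing on $J$, again contradicting minimality.
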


\begin{remark} Matters are trivial if we allow more than $d$ colours for $J$: simply colour every point of 
$J$ with colour $(d+1)$. We therefore consider $d$-colourings $ \kappa : J \to \{1,2, \dots , d\}$ of $J$. 
\end{remark}

\begin{remark}
We cannot hope for each line of a 
given colour to contain at most about $|J|^{\beta}$ points of $J$ of the same colour unless $\beta \geq 1/d$. 
To see this consider the monkey-bar/jungle-gym example where $L_j$ consists of $N^{d-1}$ lines parallel to the 
$x_j$-axis passing through the points $(m_1, \dots, m_{j-1}, 0, m_{j+1}, \dots m_{d})$ for $m_i \in \{1, \dots, N \}.$ 
If each line of $L_j$ contains at most $K$ multijoints of colour $j$ then there are at most $N^{d-1}K$
multijoints of colour $j$ altogether and hence at most $dN^{d-1}K$ multijoints altogether. But there are $N^d$
multijoints in this example, so we must have $N^d \leq d N^{d-1}K$. Hence $K$ must satisfy $K \geq N/d = |J|^{1/d}/d$.
\end{remark}

\begin{remark}
We cannot expect in general to use fewer than $d$ colours. To illustrate this in the case $d=3$, put $2N$ red lines 
parallel to $e_1$ passing through the points $(0,j,0)$ for $1 \leq j \leq N$ and $(0,0,j)$
for $1 \leq j \leq N$, and similarly put $2N$ blue lines parallel to $e_2$ and $2N$ green lines parallel 
to $e_3$ in the corresponding places. Then on the plane $x_3 = 0$ we have an $N\times N$ square 
lattice of $N$ red lines parallel to $e_1$ and $N$ blue lines parallel to $e_2$. Through each lattice 
point on this plane put a green line to make it a multijoint in $J$ but in such a way that no new multijoints in $J$ are created.
Similarly add red lines through lattice points on the plane $x_2 = 0$ and blue lines through lattice points on 
the plane $x_1 = 0$. Altogether we now have $3N^2$ multijoints, with the colours red, blue and green in 
symmetry. Can we colour this arrangement of multijoints using only two colours, say red and blue, in such a way that a 
line of a given colour contains at most $\sim N^{2/3}$ points of that colour?
If so, considering the multijoints in the 
the plane $x_3 = 0$, every red line would have at most $\sim N^{2/3}$ red multijoints, so there would be at most
$\sim N \times  N^{2/3} = N^{5/3}$ red multijoints on this plane, and simlarly at most $\sim N^{5/3}$ blue 
multijoints. Hence there would be at most $\sim N^{5/3}$ multijoints on this plane, when in fact there are $\sim N^2$. 
This contradiction shows that we cannot colour this arrangement with fewer than $3$ colours, and similar examples in 
higher dimensions show that in $\mathbb{F}^d$ we will need $d$ colours in general.
\end{remark}

\bigskip
\noindent
Our setting with $d$ families of lines is a variant of the setting of the so-called joints problem.
There we have a single collection $L$ of lines 
in $\mathbb{F}^d$, and we define a \emph{joint of $L$} to be any point which lies at the intersection of $d$ 
lines from $L$ with the condition that the set of directions of those $d$ lines should span $\mathbb{F}^d$.
In recent years there has been quite a bit of interest in the joints problem and it is now known that
if $J$ is the set of
joints of $L$ then we have
\begin{equation}
    |J|\leq C_d |L|^{d/(d-1)}
\end{equation}
where $C_d$ depends only on the dimension $d$.
This was originally proved in the case $\mathbb{F} = \rr$ by Guth and Katz in \cite{MR2680185} for $d=3$, then
for a general $d\geq 3$ by Quilodr\'an \cite{MR2594983} and independently by
Kaplan, Sharir and Shustin \cite{MR2728035}. The extension to general fields is in \cite{CI}. 

\medskip
\noindent
The natural question in 
our setting is whether we have, with $J$ now being the set of multijoints of $\mathcal{L}$,
\begin{equation}\label{MK}
    |J|\leq C_d  \prod_{j=1}^d |L_j|^{1/(d-1)}
\end{equation}
where $C_d$ depends only on the dimension $d$. At the moment this question seems out of reach
except in two dimensions, and we instead consider the 
related problem described above.

\medskip
\noindent
Let us explain the relevance of our result to \eqref{MK}. In \cite{MR2525780}
Dvir proved the finite field Kakeya conjecture. Since then, his central idea, dubbed the
\emph{polynomial method}, has been used extensively, among other things in the
cited work on the joints problem. In another direction, Guth \cite{MR2746348}
extended the polynomial method to prove the endpoint case of the multilinear
Kakeya conjecture in $\rr^d$. This is a continuous version of inequality \eqref{MK} introduced above. 
His proof used algebraic topology but see
\cite{MR3019726} for a treatment which relies only on the Borsuk--Ulam theorem.

\medskip
\noindent
Suppose we have $d$ families $\mathcal{T}_j$ of doubly-infinite tubes $T_j$ in $\rr^d$ of infinite length and 
unit cross-section, and suppose that each tube in $\mathcal{T}_j$ points approximately in the direction
of the $j$-th standard basis vector $e_j$.\footnote{Guth's set up is more relaxed than this, see also \cite{MR2860188}.}
Guth's argument involves a preliminary manipulation and then the main work
goes into proving that for every non-negative function $M$ there exist
functions $S_j$, $j=1,\dots,d$, such that
\begin{align}
    \label{guthone}
    M(x) &\leq \left( \prod_{j=1}^d S_j(x) \right)^{1/d}\quad\text{and}\\
    \label{guthtwo}
    \sum_x S_j(x) &\leq  C_d \|M\|_d.
\end{align}
The domain of the functions $M$ and $S_j$ is the set of unit cubes in $\rr^d$, and inequality
\eqref{guthone} is supposed
to hold for each cube. The sum in inequality \eqref{guthtwo} is over cubes $x$ meeting a tube $T_j \in \mathcal{T}_j$,
and inequality \eqref{guthtwo} is supposed to hold
for each tube in the collection $\mathcal{T}_j$, for all $j=1,\dots,d$.

\medskip
\noindent
In considering inequality \eqref{MK} one is naturally led to consider inequalities \eqref{guthone} 
and \eqref{guthtwo} where the tubes are replaced by lines (of zero width),
the unit cubes by points
and where we suppose that if $x\in l_j\in L_j$ for $j=1,\dots,d$ then the directions of
the $l_j$ should span $\mathbb{F}^d$, i.e. that 
$x$ is a multijoint according to our definition above.
Note that in the case of $\mathbb{R}^d$ straightforward limiting arguments applied to
the results of \cite{MR2746348} and \cite{MR2860188} do not yield an answer
to the question of the satisfiability of \eqref{guthone} and \eqref{guthtwo}, or of
the validity of \eqref{MK} in this setting.

\medskip
\noindent
If in \eqref{guthone} and \eqref{guthtwo} we replace the general nonnegative function $M$ by a 
characteristic function $\chi_J$ of a set of multijoints $J$, and the geometric
mean by the (larger) arithmetic mean we arrive at the (easier) problem of finding $S_j$ such that 

\begin{align}
    \label{cvone}
    \chi_J(x) &\leq \frac{1}{d}\sum_{j=1}^d S_j(x)\quad\text{and}\\
    \label{svtwo}
    \sum_x  S_j(x) &\leq  C_d |J|^{1/d}.
\end{align}

\noindent
Theorem~\ref{thmmain} is equivalent to this new problem: if we have such $S_j$, for each $x$, 
choose a $j$ with $S_j(x) \geq 1$ and assign colour $j$ to $x$; conversely, if we have a colouring 
satisfying the conclusion of Theorem~\ref{thmmain}, declare $S_j(x) = d$ if $x$ has colour $j$ and 
$S_j(x) = 0$ otherwise. 

\medskip
\noindent
Finally, we remark that when $d=2$ there is a simple {\em ad hoc} argument leading to the conclusion of 
Theorem~\ref{thmmain}. Indeed, suppose in $\mathbb{F}^2$ we have a family of blue lines and a family of 
red lines, (with no line having both colours). If a blue line contains at most $\sqrt{2}|J|^{1/2}$ 
bijoints, colour all of those bijoints blue. Colour all other bijoints red. Suppose we have a red line 
with as many as $k = \sqrt{2}|J|^{1/2} + 1$ red bijoints on it. Then each of these bijoints is on a 
(different) blue line, which must therefore contain more than $\sqrt{2}|J|^{1/2}$ bijoints (as they are 
not all blue). Hence there are more than 
$$\sqrt{2}|J|^{1/2} + (\sqrt{2}|J|^{1/2} - 1) + (\sqrt{2}|J|^{1/2} - 2) + \cdots +(\sqrt{2}|J|^{1/2} - (k-1))
= \sqrt{2} k |J|^{1/2} - (k-1)k/2$$  
$$ = \sqrt{2} (\sqrt{2}|J|^{1/2} + 1) |J|^{1/2} - \sqrt{2}|J|^{1/2}(\sqrt{2}|J|^{1/2} + 1) /2 
= |J| + \frac{|J|^{1/2}}{\sqrt{2}} > |J|$$
distinct bijoints altogether, which is a contradiction. Hence each red line also contains at most $\sqrt{2}|J|^{1/2}$
red points too.

\section{Proof of Theorem~\ref{thmmain}}
\noindent
We let $m$ be a positive integer and $J$ be a set of multijoints of $\mathcal{L}$.
We say that a colouring $\kappa:J\to\{1,\dots,d\}$
is \emph{$(m+1)$-unsaturated}
if for any $j=1,\dots,d$ and any
$l_j\in L_j$ we have
$$|\{x\in l_j \, :  \kappa(x)=j\}|\leq m.$$
Otherwise the colouring will be called
\emph{$(m+1)$-saturated}. So in an $(m+1)$-saturated colouring there is some line containing $(m+1)$ 
members of $J$ with the same colour as the line. Theorem~\ref{thmmain} can be restated as:

\begin{thm}
    \label{thmmain2}
Let $\mathcal{L}$ be a generic family of collections of lines in $\mathbb{F}^d$ as above and let $J$ be a set 
of multijoints of $\mathcal{L}$.
Then there exists a constant $C_d$ which depends only on the dimension
$d$ and not on $\mathcal{L}$ or $J$
and an integer $m$ with $m\leq C_d|J|^{1/d}$
such that $J$ is colourable with
an $(m+1)$-unsaturated colouring.
\end{thm}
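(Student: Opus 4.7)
The plan is to argue by induction on $|J|$ using the polynomial method. Set $D = \lceil c_d |J|^{1/d}\rceil$ for a constant $c_d$ chosen large enough that the space $V$ of polynomials in $\mathbb{F}[x_1,\dots,x_d]$ of total degree at most $D$ vanishing on $J$ has large dimension; a parameter count gives $\dim V \ge \binom{D+d}{d} - |J|$, which is positive, and in fact comfortably large, for $c_d$ big. Pick a nonzero $P \in V$. For each $x\in J$, writing $l_j(x)\in L_j$ for the line of colour $j$ through $x$, set $I(x) = \{\, j : P|_{l_j(x)} \equiv 0\,\}$ and split $J$ into \emph{good} multijoints $J_g = \{x : I(x)\neq\{1,\dots,d\}\}$ and \emph{bad} ones $J_b = J\setminus J_g$.

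For each good $x$, pick any colour $\kappa(x)\in\{1,\dots,d\}\setminus I(x)$. Then for every $j$ and every $l\in L_j$, the colour-$j$ points of $J_g\cap l$ are zeros of $P|_l\not\equiv 0$, of which there are at most $D = O(|J|^{1/d})$. For the bad multijoints the genericity assumption is crucial: if $x\in J_b$ then $P$ vanishes identically on the $d$ spanning lines through $x$, so all $d$ directional derivatives of $P$ at $x$ vanish, and since the directions span $\mathbb{F}^d$ we conclude $\nabla P(x) = 0$. In particular each partial derivative $\partial_i P$ is a polynomial of degree at most $D-1$ vanishing on $J_b$, so $J_b$ is strictly smaller than $J$ (otherwise $\partial_i P$ would contradict the role of $D$). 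Applying the inductive hypothesis to $J_b$ (which is itself a set of multijoints of $\mathcal{L}$) yields a colouring with at most $C_d|J_b|^{1/d}$ same-colour points per line, which combined with the good colouring gives at most $D + C_d|J_b|^{1/d}$ colour-$j$ points on each line $l\in L_j$.

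The main obstacle is to close the induction \emph{quantitatively}: for $D + C_d|J_b|^{1/d}\le C_d|J|^{1/d}$ we need
\[ C_d \ge \frac{c_d}{1-(|J_b|/|J|)^{1/d}}, \]
and hence $|J_b|/|J|\le\alpha$ for some constant $\alpha<1$ depending only on $d$, rather than merely $|J_b|<|J|$. I expect the resolution to go by choosing $P$ not arbitrarily but so as to make $|J_b(P)|$ small: for each $x\in J$ the subspace of $V$ consisting of polynomials vanishing on every one of the $d$ lines through $x$ has positive codimension (provided $V$ is big enough, which is where ``$c_d$ large'' together with the genericity assumption pays off), so by averaging over a suitable family of $P\in V$ — a random $P$ when $\mathbb{F}$ is finite, or a generic $P$ when $\mathbb{F}$ is infinite — one finds a single $P$ with $|J_b(P)|\le\alpha|J|$ for a $d$-dependent $\alpha<1$. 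Executing this codimension/averaging step carefully, and thereby closing the induction with $C_d = c_d/(1-\alpha^{1/d})$, is the crux of the argument; the rest is essentially the ``polynomial-then-directional-derivative'' bookkeeping familiar from the joints problem, with the multijoint colouring playing the role of the spanning-directions cancellation.
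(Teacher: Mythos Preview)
Your approach is genuinely different from the paper's, and the gap you yourself flag as ``the crux'' is real and, as stated, does not close.

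The difficulty is precisely the positive-codimension claim. For a fixed multijoint $x$, suppose that for \emph{every} colour $j$ there is a line $l\in L_j$ through $x$ with $|l\cap J|>D$. Then any $P\in V$ (degree $\le D$, vanishing on $J$) has more than $D$ zeros on each such $l$ and hence vanishes identically on it; so $x\in J_b(P)$ for \emph{every} $P\in V$, i.e.\ $W_x=V$ and the codimension is zero. Neither ``$c_d$ large'' nor genericity prevents this: genericity only says the directions span, not that the lines are light. Thus your averaging/genericity argument cannot, on its own, give $|J_b|\le\alpha|J|$: you would first need to show that the set of such ``fully heavy'' points has size at most $\alpha|J|$. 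But that statement --- that not too many multijoints can lie on heavy lines in all $d$ colours --- is itself a multijoint-type estimate of essentially the same strength as what you are trying to prove, so the argument is circular as it stands. (Note also that your parenthetical ``otherwise $\partial_i P$ would contradict the role of $D$'' only yields $J_b\subsetneq J$ if $P$ is chosen of \emph{minimal} degree; with $D=\lceil c_d|J|^{1/d}\rceil$ fixed in advance it yields nothing.)

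For contrast, the paper does not use the polynomial method at all in the main construction. It builds the colouring one multijoint at a time: given an $(m+1)$-unsaturated colouring of $J_c$ and a new point $x_0$, it constructs a rooted tree recording the chains of recolourings that would be forced, defines a strict partial order (``more advanced'') on the set of unsaturated colourings, and shows a trichotomy: either one can colour $x_0$ directly, or one can pass to a strictly more advanced colouring, or the tree is large. Quilodr\'an's lemma enters only as a black box in the third alternative, to show the tree has at least $C_d^{-d}m^d$ vertices, which is impossible once $m>C_d|J_c|^{1/d}$. Finiteness of the set of colourings then forces termination. This combinatorial/extremal route sidesteps entirely the quantitative decay $|J_b|\le\alpha|J|$ that your inductive scheme requires.
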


\begin{proof}
Fix a positive integer $m$ and the set $\mathcal{L}$.
Let $J_c$
be a set of multijoints of $\mathcal{L}$ which is colourable with
an $(m+1)$-unsaturated colouring and let $x_0$ be a
multijoint of $\mathcal{L}$ which does not belong to $J_c$.

\medskip
\noindent
Our aim is to prove the following claim.
\begin{claim}
\label{claim1}
If $m > C_d|J_c|^{1/d}$ then
$\tilde{J}=J_c\cup\{x_0\}$ is colourable with an $(m+1)$-unsaturated colouring.
\end{claim}

\noindent
This claim immediately proves the theorem: every singleton subset of $J$ is trivially colourable with an 
$(m+1)$-unsaturated colouring and the claim allows us to add points one at a time, preserving the property
of being colourable with an $(m+1)$-unsaturated colouring, until the size of the set reaches $(m/C_d)^d$, 
which by assumption will not happen before we exhaust $J$.\footnote{As we shall see below in Section~\ref{trichotomy}, 
our approach constructs a suitable colouring of $\tilde{J}$.}

\medskip
\noindent
We now turn to the proof of the claim.
To simplify notation we use $J$ for what is called $J_c$ in the statement of the claim.
Let us label the elements of $J=\{x_1,\dots,x_\nu\}$ and order the multijoints
in $\tilde{J}$ according to the indices.
Let $K$ be the set of colourings of $J$ which are $(m+1)$-unsaturated.

\medskip
\noindent
We wish to define a strict partial ordering on $K$. To do this we construct
for every $\kappa\in K$ a {\bf coloured rooted tree} $T$ whose vertices belong to
$\tilde{J}$.
The tree will be rooted at $x_0$ (which is achromatic) and all the other vertices will be members of
$J$ and will be assigned the colour given to them by $\kappa$. (We shall not colour the edges of the tree.) 

\subsection{Construction of the tree}\label{tree}
Let us fix a $\kappa\in K$ and describe the construction of the tree $T$
with an iterative process.

\medskip
\noindent
At the $0$-th step, the tree $T_0$ has one vertex, $x_0$, and no
edges. We will maintain an ordering on the vertices, based primarily
on the step in which a vertex gets added and secondarily on
the ordering inherited from $\tilde{J}$. In accordance with that we say that
$x_0$ is the first element of the tree and give it the alternative name $y_1$.

\medskip
\noindent
At the $i$-th step we consider $T_{i-1}$, and either construct a $T_i$, or else stop the procedure and declare
$T := T_{i-1}$. We consider the $i$-th element of the tree $T_{i-1}$, which we call $y_i$, and construct 
$T_i$ by adding one or more children from amongst the members of $\tilde{J}$ not already in $T_{i-1}$ to $T_i$, 
and then connect $y_i$ to its children with edges. 

\medskip
\noindent
If there is no $i$-th element in the tree $T_{i-1}$ we say that the tree is \emph{fully constructed}
and we define $T := T_{i-1}$. Clearly this must happen before or when we reach step $|J|$.

\medskip
\noindent
Otherwise, we proceed as follows. The children of $y_i$ will be the elements of $\tilde{J}$ which are not
already in the tree, and which are the reasons that we may not change the colour of $y_i$ without the colouring 
becoming $(m+1)$-saturated or at least in danger of becoming so.\footnote{The precise significance of this will become clearer 
as the proof proceeds.} Specifically, for each colour $j$ different from $\kappa(y_i)$\footnote{In the case $i=1$ 
this simply means all colours $j$. This understanding applies in several places below.}
let $L_j^{(i)}$ be the subset of $L_j$ consisting of the lines $l_j$ going
through $y_i$ such that 
\begin{equation}
\label{eqcond}
|\{x\in l_j\cap J\, :  \kappa(x)=j\}| + 
|\{x\in l_j\cap T_{i-1}\cap J\, :  \kappa(x)\neq j\}| \geq m.
\end{equation}
(Here and later we abuse notation and use $T_{i-1}$ also to denote the vertex set of the tree
$T_{i-1}$.) If the collection $L_j^{(i)}$ is 
empty for some $j\neq \kappa(y_i)$ we say that the colouring is \emph{advanceable at step $i$}.\footnote{See
Section~\ref{trichotomy} Claim~\ref{claim2}(b) for the reason we use this 
terminology.} Then we stop the construction, and declare $T := T_{i-1}$.

\bigskip
\noindent
Otherwise, if the colouring is not advanceable at step $i$, we define for each
$j\neq\kappa(y_i)$
the set $I_j^{(i)}$ which consists of all the points of $J$
of colour $j$ on any line
from $L_j^{(i)}$, excluding those points which are already in the tree
$T_{i-1}$. We let the tree $T_i$ be the tree $T_{i-1}$ with the points from
$I_j^{(i)}$ for all $j\neq \kappa(y_i)$ added as vertices, specifically
as children of $y_i$. The edges of $T_i$ are those of $T_{i-1}$ together
with edges linking $y_i$ to each of its children. Note that no child has the same colour as its parent.

\bigskip
\noindent
We remark that it is possible that $I_j^{(i)}$ may be empty for some colour
$j$ even though $L_j^{(i)}$ is non-empty. This is the case if all
the points of colour $j$ in $J$ which lie on any line in $L_j^{(i)}$
are vertices of $T_{i-1}$. If all of the $I_j^{(i)}$ are empty we let $T_i := T_{i-1}$.

\medskip
\noindent
If the colouring is not advanceable at any step then eventually the tree
will become fully constructed. We call such a colouring \emph{non-advanceable}.

\subsection{The first stage in the construction}\label{firststage}
To fix ideas, let us run through the first stage of the construction. We have $T_0 = \{y_1\} = \{x_0\}$
and so there is a first element of $T_0$. For each $j \in \{1,2, \dots, d\}$ we have that 
$L_j^{(1)}$ is the subset of $L_j$ consisting of the lines $l_j$ going
through $y_1$ such that 
\begin{equation}
\label{eqcondd}
|\{x\in l_j\cap J \, :   \kappa(x)=j\}| \geq m
\end{equation}
since the second term on the left-hand side of \eqref{eqcond} is zero. Since $\kappa$ is $(m+1)$-unsaturated, 
\eqref{eqcondd} means that $|\{x\in l_j\cap J \, : \kappa(x)=j\}|= m$. Now either $L_j^{(1)}$ is empty for some 
colour $j$, or it is non-empty for all $j$.
\begin{enumerate}
\item In the first case we have that $\kappa$ is advanceable at step $1$, 
we stop the procedure and declare $T = T_0$. {\em Note that in this case, if for a certain $j$, $L_j^{(1)} = \emptyset$, 
then for all $l_j \in L_j$ passing through $y_1$ we have 
$$|\{x\in l_j\cap J \, :  \kappa(x)=j\}| < m.$$
In this case we can simply assign the colour $j$ to $x_0$ and we are done.}\footnote{This observation will be 
important below.}
\item Otherwise, when $L_j^{(1)} \neq \emptyset$ for all $j$, we have 
$$I_j^{(1)} = \{x \in J \, :  \kappa(x) = j \mbox{  and  } x \in l_j \mbox{  for some } l_j \in L_j^{(1)} \}$$
and we note that since every line in each $L_j^{(1)}$ has exactly $m$ members of $J$ of colour $j$ on it, each $ 
I_j^{(1)}$ has at least $m$ members, and so $T_1$ will be a proper extension of $T_0$, and in particular will have a 
second member ready for the construction of $T_2$.
\end{enumerate}

\medskip
\noindent
The construction of the tree $T$ depends on the colouring $\kappa$; when we wish to emphasise this we shall use 
the notation $T(\kappa)$ and likewise $I_j^{(i)}(\kappa)$ to highlight this dependence.

\subsection{A strict partial ordering}
Now we turn to the definition of the strict partial ordering on $K$.
Take $\kappa_1,\kappa_2\in K$ and construct the trees
$T(\kappa_1)$ and $T(\kappa_2)$.
We say that \emph{$\kappa_1$ is more advanced
than $\kappa_2$ at level $i_0$} if
\renewcommand{\theenumi}{\roman{enumi}}
\begin{enumerate}
\item $I_j^{(i)}(\kappa_1) =I_j^{(i)}(\kappa_2)$
for all $j\neq \kappa_1(y_i)$ and for all $i<i_0$;
\item
$I_j^{(i_0)}(\kappa_1)\subseteq I_j^{(i_0)}(\kappa_2)$ for all
$j\neq \kappa_1(y_{i_0})$;
\item at least one of the inclusions in item 
(ii) is strict.
\end{enumerate}

\noindent
In other words, $\kappa_1$ is more advanced than $\kappa_2$ at level $i_0$ if the 
coloured trees $T_{i_0 - 1}(\kappa_1)$ and $T_{i_0 - 1}(\kappa_2)$ are identical,
and $T_{i_0}(\kappa_1)$ is a proper coloured subtree of $T_{i_0}(\kappa_2)$.\footnote{Note that 
this expresses the idea that the construction of the tree for $\kappa_1$ as in the previous subsection 
is closer to termination than that for  $\kappa_2$; hence the terminology ``more advanced''.}

\medskip
\noindent
We say that $\kappa_1$ is \emph{more advanced} than $\kappa_2$
if there is a level $i_0$
such that $\kappa_1$ is more advanced than $\kappa_2$ at level $i_0$.
Note that there can be at most one such level because of the requirement
of a strict inclusion at level $i_0$. It is clear that the notion of being 
more advanced is a strict partial ordering on $K$.

\subsection{A trichotomy}\label{trichotomy}
Now, for a general colouring $\kappa\in K$ there are three possibilities.
It may be advanceable at step $1$, it may be advanceable at some step
$i>1$ or it may be non-advanceable. We will prove the following claim.
\begin{claim}
\label{claim2}
\mbox{}
\renewcommand{\theenumi}{\alph{enumi}}
\begin{enumerate}
\item If $\kappa$ is advanceable at step $1$ then we can
extend $\kappa$ to an $(m+1)$-unsaturated colouring of $\tilde{J}$.
\item If $\kappa$ is advanceable at some step $i>1$ then
there is a colouring $\tilde\kappa \in K$ which
is more advanced than $\kappa$.
\item If $\kappa$ is non-advanceable then
$|T(\kappa)\cap J|\geq C_d^{-d} m^d$.
\end{enumerate}
\end{claim}

\noindent
We will establish Claim~\ref{claim2} below, but for now we note that 
Claim~\ref{claim1} follows immediately from it. Indeed,
the hypothesis of Claim~\ref{claim1} is that $m > C_d|J|^{1/d}$, 
so
$$|T(\kappa)\cap J|\leq |J|< C_d^{-d} m^d,$$
meaning that under the hypothesis of Claim~\ref{claim1} the third alternative cannot hold for any $\kappa \in K$.
So every $\kappa \in K$ must be advanceable at some step. If $\kappa$ is advanceable at step $1$, part (a) 
of Claim~\ref{claim2} gives us what we want; if not, $\kappa$ will be advanceable at some step $i > 1$ and there will be a 
$\tilde{\kappa} \in K$ which is more advanced than $\kappa$. Once again, the third alternative cannot hold for 
$\tilde{\kappa}$, if the first alternative holds we are happy, and if the second alternative holds we obtain a 
$\tilde{\tilde{\kappa}}$ which is more advanced than $\tilde{\kappa}$. We iterate this process. Since $K$ is finite, 
a maximally advanced element of $K$ must exist, meaning that at some point of the iteration the second alternative 
cannot hold, leaving us with only the first. In summary, if $m > C_d|J|^{1/d}$, for every $\kappa \in K$ there is some 
$\tilde{\kappa} \in K$ which is more advanced than $\tilde{\kappa}$ and which is advanceable at step $1$. Hence there 
exists an $(m+1)$-unsaturated colouring of $\tilde{J}$ as required. 

\medskip
\noindent
This procedure gives an algorithm for actually constructing an $(m+1)$-unsaturated
colouring, see Algorithm~\ref{alg1}.

\begin{algorithm}
\caption{Construct an $(m+1)$-unsaturated colouring of a set $J$ of multijoints}
\label{alg1}
\begin{enumerate}
\renewcommand{\theenumi}{\arabic{enumi}}
\setcounter{enumi}{-1}
\item We require $m>C_d|J|^{1/d}$.
\item Let $J_c$ be the empty set and $\kappa$ be a colouring of $J_c$.
\item For each point $x_0$ of $J$ do the following:
    \begin{enumerate}
        \item While $\kappa$ is advanceable at some step $i>1$ w.r.t. $\tilde{J}=J_c\cup\{x_0\}$ do the following:
            \begin{enumerate}
                \item Let $\tilde\kappa$ be a colouring of $J_c$ which is more advanced than $\kappa$,
                    constructed as in the proof of Claim~\ref{claim2} (b).
                \item Update $\kappa$ to be $\tilde\kappa$.
            \end{enumerate}
        \item Now $\kappa$ is advanceable at step 1.
        \item Extend $\kappa$ to $\tilde{J}$ by letting $\kappa(x_0)$ be some colour $j$ for which $L_j^{(1)}$ is empty.
        \item Update $J_c$ to be $\tilde{J}$.
    \end{enumerate}
\item Now $\kappa$ is an $(m+1)$-unsaturated colouring of $J$.
\end{enumerate}
\renewcommand{\theenumi}{\alph{enumi}}
\end{algorithm}

\medskip
\noindent
Note that we have already established case (a) in the discussion
of case (1) in Section~\ref{firststage}.
In the remainder of the proof we will verify the remaining two cases of
Claim~\ref{claim2}.

\subsection{Establishing Claim~\ref{claim2}(b)}
For the second case of Claim~\ref{claim2},
let us assume that $\kappa$ is advanceable at step $i_0>1$.
That means that there is a colour $j_0\neq\kappa(y_{i_0})$
such that for all lines
$l_{j_0}\in L_{j_0}$ such that $y_{i_0}\in l_{j_0}$ we have
\begin{equation}
\label{eq2}
|\{x\in l_{j_0}\cap J \; : \kappa(x)=j_0\}| +
|\{x\in l_{j_0}\cap T_{i_0-1}\cap J \, : \kappa(x)\neq j_0\}| < m.
\end{equation}
We let $j_1=\kappa(y_{i_0})$.
Let us define a new colouring $\tilde{\kappa}$ which is identical to $\kappa$
except that $\tilde\kappa(y_{i_0})=j_0$. We need to consider the effect of changing the colour of
$y_{i_0}$ on the construction of the tree $T(\tilde{\kappa})$, and in particular, we need to bear in mind 
the possibility that $y_{i_0}$ might occur earlier in the construction of $T(\tilde{\kappa})$ than of $T(\kappa)$.
As a rough guide, note that lines of colours other than the old and new colours of $y_{i_0}$ will play exactly the same role 
in both constructions, as will lines not containing $y_{i_0}$. We will need to examine vertices of the tree 
$T(\kappa)$ of colour either the old or new colour of $y_{i_0}$ for possible changes in the construction.

\medskip
\noindent
Specifically, we wish to verify that $\tilde\kappa$ is $(m+1)$-unsaturated and that
$\tilde\kappa$ is more advanced than $\kappa$.
The $(m+1)$-unsaturated conditions for $\tilde\kappa$ follow immediately
from the corresponding conditions for $\kappa$ except for lines of
colour $j_0$ which go through $y_{i_0}$. But for those lines we just saw
that
$$
|\{x\in l_{j_0}\cap J \, : \kappa(x)=j_0\}| < m.
$$
and so
$$
|\{x\in l_{j_0}\cap J \, : \tilde\kappa(x)=j_0\}|=
|\{x\in l_{j_0}\cap J \, : \kappa(x)=j_0\}|+1 \leq m.
$$
Thus $\tilde\kappa$ belongs to $K$ and it is meaningful to ask whether
$\tilde\kappa$ is more advanced than $\kappa$.

\medskip
\noindent
Since $y_{i_0}\in T(\kappa)$ we can find an index $i_1<i_0$ such that
$y_{i_0}$ is a vertex of $T_{i_1}(\kappa)$ but not of $T_{i_1-1}(\kappa)$.
That means that either $i_1=1$ or there exists a colour $j_2\neq j_1$ such that
$\kappa(y_{i_1})=j_2$ and a line $l_{j_1}\in L_{j_1}$ such that
$y_{i_0},y_{i_1}\in l_{j_1}$ and
\begin{equation}
\label{eq3}
|\{x\in l_{j_1}\cap J \, :\kappa(x)=j_1\}| +
|\{x\in l_{j_1}\cap T_{i_1-1}\cap J \, : \kappa(x)\neq j_1\}| \geq m.
\end{equation}
{\bf We want to show that $\tilde\kappa$ is more advanced than $\kappa$ at
level $i_1$.}
First let us verify 
condition (i).
If $i_0=1$ then condition (i) is vacuous.
Otherwise it is clear that there are two
types of steps we have to consider, and for other steps before $i_1$
condition (i) is immediate. The types of steps we have to consider correspond to 
vertices of the tree $T(\kappa)$ of colour equal to the new colour of $y_{i_0}$ and 
of colour equal to the old colour of $y_{i_0}$,
and more precisely are:
\begin{itemize}
\item steps $i_2$ such that $i_2<i_1$ and $\kappa(y_{i_2})\neq j_0$
but there is a line $l_{j_0}\in L_{j_0}$ such that $y_{i_2},y_{i_0}\in l_{j_0}$;
and
\item steps $i_3$ such that $i_3<i_1$ and $\kappa(y_{i_3})\neq j_1$
but there is a line
$\tilde{l}_{j_1}\in L_{j_1}$ such that $y_{i_3},y_{i_0}\in \tilde{l}_{j_1}$.
\end{itemize}
For the former case we note that $y_{i_0}\not\in T_{i_2}(\kappa)$ since
this vertex set is a subset of $T_{i_1-1}(\kappa)$ which by assumption
$y_{i_0}$ does not belong to.
Therefore we see that as $i_0 > i_2$
$$
|\{x\in l_{j_0}\cap T_{i_0-1}\cap J \, : \kappa(x)\neq j_0\}|\geq 
|\{x\in l_{j_0}\cap T_{i_2-1}\cap J \, : \kappa(x)\neq j_0\}|+1
$$
since $y_{i_0}$ is a member of the former set but not the latter.
So \eqref{eq2} shows that
$$
|\{x\in l_{j_0}\cap J \, : \kappa(x)=j_0\}| +
|\{x\in l_{j_0}\cap T_{i_2-1}\cap J \, : \kappa(x)\neq j_0\}| < m-1.
$$
Now note that
$$
|\{x\in l_{j_0}\cap J \, : \tilde\kappa(x)=j_0\}|=
|\{x\in l_{j_0}\cap J \, : \kappa(x)=j_0\}|+1
$$
and
$$
|\{x\in l_{j_0}\cap T_{i_2-1}\cap J \, : \tilde\kappa(x)\neq j_0\}|
=
|\{x\in l_{j_0}\cap T_{i_2-1}\cap J \, : \kappa(x)\neq j_0\}|
$$
where the second equality follows since $\tilde\kappa$ and $\kappa$ are
identical on $T_{i_2-1}$.
So we obtain
$$
|\{x\in l_{j_0}\cap J \, : \tilde\kappa(x)=j_0\}| +
|\{x\in l_{j_0}\cap T_{i_2-1}\cap J \, : \tilde\kappa(x)\neq j_0\}| < m
$$
and this shows that $l_{j_0}\not\in L_{j_0}^{(i_2)}(\tilde\kappa)$.
Moreover,  $l_{j_0}\not\in L_{j_0}^{(i_2)}(\kappa)$ since if it were in this
set then we would have $y_{i_0}\in T_{i_2}(\kappa)$
which is not possible as we saw above. So $y_{i_0}$ is not added to the vertices of $T(\tilde{\kappa})$ at 
step $i_2$, and we deduce that the iteration in the definition of the trees
proceeds identically at this step for $\kappa$ and $\tilde\kappa$. That is, the coloured trees $T_{i_2}(\tilde{\kappa})$
and $T_{i_2}(\kappa)$ are identical.

\medskip
\noindent
For the latter case we note that $y_{i_0}\not\in T_{i_3}(\kappa)$ since
this vertex set is a subset of $T_{i_1-1}(\kappa)$ which by assumption
$y_{i_0}$ does not belong to.
That means that
$$
|\{x\in \tilde{l}_{j_1}\cap J \, : \kappa(x)=j_1\}| + 
|\{x\in \tilde{l}_{j_1}\cap T_{i_3-1}\cap J \, : \kappa(x)\neq j_1\}| < m.
$$
Now note that
$$
|\{x\in \tilde{l}_{j_1}\cap J \, : \tilde\kappa(x)=j_1\}|=
|\{x\in \tilde{l}_{j_1}\cap J \, : \kappa(x)=j_1\}|-1
$$
and
$$
|\{x\in \tilde{l}_{j_1}\cap T_{i_3-1}\cap J \, : \tilde\kappa(x)\neq j_1\}|
=
|\{x\in \tilde{l}_{j_1}\cap T_{i_3-1}\cap J \, : \kappa(x)\neq j_1\}|
$$
where the second equality follows since $\tilde\kappa$ and $\kappa$ are
identical on $T_{i_3-1}$.
Therefore
$$
|\{x\in \tilde{l}_{j_1}\cap J \, : \tilde\kappa(x)=j_1\}| + 
|\{x\in \tilde{l}_{j_1}\cap T_{i_3-1}\cap J \, : \tilde\kappa(x)\neq j_1\}| < m-1,
$$
and this shows that $\tilde{l}_{j_1}\not\in L_{j_1}^{(i_3)}(\tilde\kappa)$.
Moreover,
$\tilde{l}_{j_1}\not\in L_{j_1}^{(i_3)}(\kappa)$ since if it were in this
set then we would have $y_{i_1}\in T_{i_3}(\kappa)$
which is not possible as we saw above. So $y_{i_0}$ is not added to the vertices of $T(\tilde{\kappa})$ at 
step $i_3$, and we deduce that the iteration in the definition of the trees
proceeds identically at this step for $\kappa$ and $\tilde\kappa$. That is, the coloured trees $T_{i_3}(\tilde{\kappa})$
and $T_{i_3}(\kappa)$ are identical.

\medskip
\noindent
Hence we conclude that the coloured trees $T_{i_1 - 1}(\tilde{\kappa})$
and $T_{i_1 - 1}(\kappa)$ are identical.

\medskip
\noindent
Now we verify conditions 
(ii) and (iii). 
We note that the only possible
difference between the sets $L_j^{(i_1)}(\kappa)$ and $L_j^{(i_1)}(\tilde{\kappa})$ for some colour
$j$ is that a line containing both $y_{i_0}$ and $y_{i_1}$ could be in one of these sets and not the other.
We already know that the line joining these points is of colour $j_1$ so for other colours
we have that $L_j^{(i_1)}(\kappa)$ and $L_j^{(i_1)}(\tilde{\kappa})$ are identical and so
$I_j^{(i_1)}(\kappa)$ and $I_j^{(i_1)}(\tilde{\kappa})$ are identical too.
For colour $j_1$ we have that $\kappa(y_{i_0})=j_1\neq\tilde{\kappa}(y_{i_0})$.
This shows that $y_{i_0}$ is an
element of $I_{j_1}^{(i_1)}(\kappa)$ but not of $I_{j_1}^{(i_1)}(\tilde{\kappa})$.
Hence we conclude that the coloured tree $T_{i_1}(\tilde{\kappa})$
is a proper coloured subtree of $T_{i_1}(\kappa)$
and conditions (ii) and (iii) are verified.

\medskip
\noindent
This establishes the second case of Claim~\ref{claim2}.

\subsection{Establishing Claim~\ref{claim2}(c)}

For the last case of Claim~\ref{claim2},
let us recall the statement of Quilodr\'an's lemma, \cite{MR2594983} and \cite{CI}, see also \cite{MR2728035}.
\begin{lem}
    Let $L$ be a collection of lines in $\mathbb{F}^d$ and 
    let $J$ be a subset of the set of joints of $L$. Suppose that $J$ has the
    property that for every line $l\in L$ the cardinality of $l\cap J$ is
    either $0$ or at least $m$. Then $|J|\geq C_d m^d$.
\end{lem}
\noindent
Let us then assume that $\kappa$ is non-advanceable.
We let $\bar{J}$ be the set of points in $J$ which are vertices of the tree
$T(\kappa)$. For a colour $j$ we let $\bar{L}_j$ be the union
of the sets $L_j^{(i)}$ for those indices $i$
such that $\kappa(y_i)\neq j$.
Then we let $\bar{L}=\bar{L}_1\cup\dots\cup\bar{L}_d$.

\medskip
\noindent
We need to verify that the hypotheses of the lemma are satisfied for
$\bar{J}$ and 
$\bar{L}$.
First we note that the elements of $\bar{J}$
are in fact joints of 
$\bar{L}$.
To see this, 
take $y_i\in\bar{J}$ with $i>1$
and assume that $\kappa(y_i)=j$. Then there is an $\tilde\imath<i$ and
a line $l_j\in L_j^{(\tilde\imath)}$ such that $y_i\in l_j$ and $\kappa(y_{\tilde{\imath}}) \neq j$
(since children always have a different colour from their parents). So $y_i \in \bar{L}_j$.
Furthermore, for all colours $\tilde\jmath\neq j$ we have by non-advanceability that
the set $L_{\tilde\jmath}^{(i)}$ is non-empty and the lines in these
sets all go through $y_i$. So $y_i \in \bar{L}_{\tilde\jmath}$. Thus for each $j_\ast \in \{1, \dots, d\}$, we have 
$y_i \in \bar{L}_{j_\ast} \subseteq L_{j_\ast}$. Since by hypothesis the collection $\mathcal{L}$ is generic, 
we conclude that $y_i$ is a joint of $\bar{L}$.



\bigskip
\noindent
Now consider a line in $\bar{L}$, say $l_j\in L_j^{(i)}$. Then by definition of $L_j^{(i)}$ we have 
$$
|\{x\in l_j\cap J\,:   \kappa(x)=j\}| + 
|\{x\in l_j\cap T_{i-1}\cap J \, :  \kappa(x)\neq j\}| \geq m.
$$
Note that all the points which are elements of the first of these sets
will be vertices of $T_i$. Therefore the two sets occuring in this
expression are subsets of $l_j \cap \bar{J}$ which are disjoint. 
Hence we have $|l_j\cap \bar{J}|\geq m$.
This shows that all the hypotheses of the lemma are satisfied and so
we deduce that
$|\bar{J}|\geq C_d^{-d} m^d$.
\end{proof}
\begin{remark}
The reader will observe that we use the hypothesis of genericity only in establishing Claim 2(c).
We conjecture that the main result holds without this hypothesis.
\end{remark}
\bibliographystyle{plain}
\bibliography{colouring}
\end{document}